\newtheoremstyle{note}{12pt}{12pt}{}{}{\bfseries}{.}{.5em}{}
\title{\LARGE\textbf{Cherry flows with non trivial attractors}}
\author{Liviana Palmisano\\ Uppsala University\\
Uppsala, Sweden}
\newtheorem{theo}[equation]{Theorem}
\newtheorem{prop}[equation]{Proposition}
\newtheorem{defin}[equation]{Definition}
\numberwithin{equation}{section}
\newtheorem{lemma}[equation]{Lemma}
\newcommand{\N}{{\mathbb N}}
\newcommand{\Z}{{\mathbb Z}}
\newcommand{\R}{{\mathbb R}}
\newcommand{\T}{{\mathbb T}^2}
\renewcommand{\S}{{\mathbb S}^1}
\newcommand{\Cinf}{{{\mathcal C}^\infty}}
\newcommand{\C}{{\mathcal C}}
\providecommand{\norm}[1]{{\lVert #1 \rVert}_{{\mathcal C}^n}}
\begin{document}
\maketitle
\author
\textcolor{blue}{}\global\long\def\TDD#1{{\color{red}To\, Do(#1)}}

\begin{abstract}
We provide an example of Cherry flow (\emph{i.e.} $\Cinf$ flow on the $2$-dimensional torus with a sink and a saddle) having quasi-minimal set which is an attractor. 
The first return map for such a flow, constructed also in the paper, is a $\Cinf$ circle map having a flat interval and a non-trivial wandering interval. 
\end{abstract}
\section{Introduction}

\subsection{Discussion and statement of results}
In 1986 Poincar\'e conjectured the existence of a $\Cinf$ flow on the $2$-dimensional torus with a non-trivial minimal set. 
The conjecture was later disproved by Denjoy. In 1937 Cherry proved that the conjecture of Poincar\'e is true if you ask 
the existence of a non-trivial quasi-minimal set.
He constructed $\Cinf$ flows on the torus, called Cherry flows, without closed trajectories and with two singularities, a sink and a saddle, both hyperbolic. 
The quasi-minimal set in this case is the 
complement of the stable manifold of the sink. 
The geometrical properties of the quasi-minimal set have been studied in \cite{MvSdMM}, \cite{my3}, \cite{my2}. 
In all the known examples the quasi-minimal set has zero Lebesgue measure and almost every orbit converges to the sink.
In this paper we construct examples of Cherry flows having quasi-minimal 
sets which are non-trivial attractors. These examples have two coexisting attractors: the sink and the quasi-minimal set.

The construction is based on the study of the first return map for a Cherry flow and on one of the fundamental question in circle dynamics: whether a circle map is 
conjugate to a rotation. 
One of the first steps in this area was done by Denjoy \cite{Denjoy}. He proved that any $\C^1$ diffeomorphism with irrational rotation number and 
with derivative of bounded variation is conjugate to a rotation. 
In the same paper he showed that the hypothesis on the derivative is essential by giving examples of $\C^1$ diffeomorphisms with irrational rotation number 
which are not conjugate to a rotation.  
The reason for which the Denjoy function is not conjugate to a rotation is the presence of a wandering interval. 
A wandering interval $I$ has the property that, for all $n\in\Z$, $f^n(I)\cap I=\emptyset$ and $f^n_{|I}$ is a diffeomorphism. 
Since then examples of this kind, called Denjoy counterexamples, have attracted attention of many mathematicians.

Another important contribution given by Katok in \cite{Katok} is a Denjoy counterexample which is $\Cinf$ 
everywhere with the exception of one point which is a non-flat critical point of the function. 
A $\Cinf$ circle function being not conjugate to a rotation and with at most two flat critical points was constructed by Hall in \cite{hall}. 
In \cite{my} the author of this paper generalized Hall's construction showing a piece-wise $\Cinf$ Denjoy counterexample having a flat half-critical point. 

Our aim is to further extend the ideas of \cite{hall} and \cite{my}. We construct a $\Cinf$ Denjoy counterexample 
having an arc of critical points all being flat. Our result is the following:
 
 \begin{theo}\label{conden}
For any irrational number $\rho \in[0,1)$ there exists a $\Cinf$, non-decreasing circle map $f$ of degree one and an arc $U$ of the circle such that:

\begin{itemize}
	\item $f(U)$ is a point,
	\item $f$ has rotation number $\rho$, 
	\item $f$ is a Denjoy counterexample.
\end{itemize}
\end{theo}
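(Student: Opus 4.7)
My approach would be a blow-up construction of Denjoy type along the lines of \cite{hall} and \cite{my}, modified so that the flat piece is an entire arc (two-sided flat) rather than a single critical point or half-arc. Starting from the rigid rotation $R_\rho$ on $\S$, I would fix a point $x_{-1}\in\S$ and consider its full orbit $x_n:=R_\rho^{n+1}(x_{-1})$, $n\in\Z$, which is dense. Choose a sequence of positive lengths $\{\ell_n\}_{n\in\Z}$ with $\sum_n\ell_n<\infty$, and form a new circle $\widetilde{\S}$ by inserting an arc $J_n$ of length $\ell_n$ at each $x_n$; there is a natural monotone collapsing projection $\pi:\widetilde{\S}\to\S$ which is a homeomorphism off the union of the $J_n$. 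Set $U:=J_{-1}$, and let $f|_U$ be the constant map to a chosen point $p\in J_0$. For each $n\neq -1$, let $f|_{J_n}:J_n\to J_{n+1}$ be a $\Cinf$ orientation-preserving diffeomorphism whose derivatives at the two endpoints are flat to a prescribed order, and extend $f$ continuously to the Cantor-like complement $K=\widetilde{\S}\setminus\bigcup J_n$ by imposing $\pi\circ f=R_\rho\circ\pi$. Then $f$ is non-decreasing of degree one, $f(U)=\{p\}$, and semiconjugate to $R_\rho$ via $\pi$, so its rotation number is $\rho$. Moreover, $J_0$ is a wandering interval because $f^n(J_0)=J_n$ for $n\geq 0$ are pairwise disjoint, so $f$ is a Denjoy counterexample.

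The entire burden of the proof is therefore to show that $\{\ell_n\}$ and the family $\{f|_{J_n}\}$ can be chosen so that $f$ is $\Cinf$ on the whole circle. At interior points of the $J_n$ smoothness is automatic; the work is at every boundary point of every $J_n$, and most critically at $\partial U$. At such a point $y\in K$, continuity of $f^{(k)}$ forces the boundary $k$-jet of $f|_{J_n}$ (with $y\in\partial J_n$) to agree with the limits of the $k$-jets of $f|_{J_m}$ for every sequence $J_m\to y$. These accumulations occur at the closest-return indices of the orbit of $x_{-1}$, i.e.\ $m=q_j-1$ and $m=-q_j-1$ for the convergent denominators $q_j$ of $\rho$, and on $J_m$ the $k$-th derivative of $f$ is of order $\ell_{m+1}/\ell_m^k$ times shape factors coming from $f|_{J_m}$. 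Choosing $\ell_n$ with super-exponentially fast decay along these indices, and taking $f|_{J_n}$ flat to the appropriate order at the endpoint nearest to $U$, would force $f^{(k)}(y)\to 0$ as $y\to\partial U$ from outside, matching the vanishing derivatives inside $U$.

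The main obstacle is that this compatibility requirement must hold at every $\partial J_n$ simultaneously, not only at $\partial U$. The presence of an entire flat arc (rather than a single flat critical point as in \cite{hall}, or a one-sided flat point as in \cite{my}) introduces two-sided flatness which propagates through all $f$-preimages of $U$ and constrains the $k$-jets of $f|_{J_n}$ for every $n$ at once. I would resolve this via a doubly-indexed recursive definition of the lengths $\ell_n$ keyed to the continued fraction expansion of $\rho$, together with a parametric family of $\Cinf$ diffeomorphism models on $[0,1]$ with endpoint derivatives tunable to any prescribed finite or infinite order. With these ingredients, the verification of $\Cinf$-smoothness becomes a technical bookkeeping exercise in matching derivatives at all orders, extending the methods of \cite{hall} and \cite{my} to the two-sided flat case.
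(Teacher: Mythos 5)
Your proposal is a one-shot Denjoy-style blow-up along an orbit of $R_\rho$, which is a genuinely different strategy from the paper's. The paper does not blow up a rotation. It starts from a $\C^1$ circle map $f_0$ that already has a flat interval $U_0$ with quadratic tangencies at both endpoints, adjusts by a translation so $\rho(f_0)=\rho$, and then builds a sequence $f_n$ by small $\C^n$ perturbations: at each step it shaves a piece of length $2\epsilon/2^n$ off one end of the flat interval (alternating sides), raises the order of tangency at that end by one, and controls the orbit of a fixed small interval $I\subset f_0^{-1}(b_0-\epsilon,b_0)$ so that $|f_n^j(I)|\to 0$ uniformly in $n$. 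Condition (7) gives convergence in every $\C^n$ norm, so the limit is $\Cinf$, and Lemma~\ref{eq} then yields the wandering interval. A key and non-obvious ingredient you do not need to worry about in your scheme is that, at each step, the paper must show the orbit of the residual flat piece $J_n$ under the perturbed map $\tilde f_{n,\delta'}$ eventually enters $U_n$; this is proved by contradiction against the Corollary to Theorem~1 of~\cite{a}, a rigidity theorem for circle maps with a flat interval and polynomial (non-flat) tangencies. That rigidity theorem is precisely the obstruction that forces the limit map to have infinite-order tangencies at $\partial U$, which your construction also has automatically (since $f|_U$ is constant and $f$ is $\Cinf$).

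The gap in your proposal is exactly the one you flag and then wave away: proving that the lengths $\{\ell_n\}$ and the diffeomorphism models $f|_{J_n}$ can be chosen so that the resulting map is $\Cinf$ at every boundary point of every $J_n$ and at every point of the residual Cantor set $K$. This is not a bookkeeping exercise: you are attempting to solve, in one shot, an infinite system of jet-matching constraints of all orders at all accumulation points of $\{\partial J_n\}$, where the accumulation pattern near any given $\partial J_m$ is governed by the full continued-fraction expansion of $\rho$ and interacts with the length sequence globally (the derivative of $f|_{J_n}$ near its endpoints involves ratios like $\ell_{n+1}/\ell_n$ tempered by the endpoint-flatness of the chosen model, while the required jets at $\partial J_n$ come from the accumulating $J_m$'s). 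At $\partial U$ you need all derivatives to vanish from outside, which propagates constraints through all preimages of $U$; at other $\partial J_n$ you need nonzero derivatives matching; and on $K$ you need a Whitney-type $\Cinf$ extension condition to hold, with $K$ a fat Cantor set of positive measure. You have not shown such a consistent global choice exists. The paper's iterative scheme avoids this entirely: at step $n$ it only needs to verify $\C^n$ regularity of a single explicit perturbation $f_{n+1}$ of the already-built $f_n$, keeping $\|F_{n+1}-F_n\|_{\C^{n+1}}$ small, and lets the infinite-order degeneracy at $\partial U$ emerge in the limit rather than imposing it from the start. Unless you can supply the global jet-matching argument, your proof as written is incomplete at its central point.
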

Notice that the arc $U$ is not a wandering interval for $f$, the forward iterates are not diffeomorphisms. 
%
%

Our result not only contributes to the field of Denjoy counterexamples but also can be applied to study the quasi-minimal set
\footnote{The closure of any non-trivial recurrent trajectories is called a quasi-minimal set.} of Cherry flows. 

Using Theorem\ref{conden} we are now able to study more subtle topological properties of the quasi-minimal set.
 We recall that any Cherry flow has a well defined rotation number $\rho \in [0,1)$  equal to the rotation number of its first return map to a chosen
 Poincar\'e section and we state:

\begin{theo}\label{3}
For any irrational number $\rho\in[0,1)$ there exists a Cherry flow with rotation number $\rho$ and with quasi-minimal set which is an attractor. 
The basin of attraction of the quasi-minimal set has non-empty interior.  
\end{theo}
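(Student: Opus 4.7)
The plan is to bootstrap the $\Cinf$ circle map $f$ produced by Theorem~\ref{conden} into a Cherry flow on $\T$ by realising $f$ as the first return map to a Poincar\'e section. The flat arc $U$ will be identified with the trace of the stable manifold of the saddle on the section, which will be the reason these points do not come back to the section; the distinguished wandering interval $W\subset\S$ supplied by the Denjoy counterexample condition (disjoint from $U$, on which all iterates $f^n$ are diffeomorphisms and with $f^n(W)\to K$, $K$ the Cantor minimal set) will be suspended to an open subset of $\T$ that lies in the basin of the quasi-minimal set.

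First I would build the flow. One fixes a smooth circle $\Sigma\subset\T$ bounding an annular neighborhood, places a hyperbolic saddle $s$ at a marked point and a hyperbolic sink $p$ away from $\Sigma$, and designs a smooth vector field whose trajectories cross $\Sigma$ transversally and return to $\Sigma$ according to $f$. The arc $U$ is glued to the local stable manifold $W^s(s)$, so that orbits starting on $U$ are swept into the saddle and never come back; the collapse $f(U)=\{q\}$ corresponds to the two separatrices of the unstable manifold $W^u(s)$ re-entering $\Sigma$ at $q$. The flatness of $f$ on $U$, together with an appropriate choice of eigenvalue ratio at the saddle, produces the right $\Cinf$ matching between the local behaviour at $s$ and the dynamics on $\Sigma$; this is the standard Cherry construction already used in \cite{MvSdMM,my3,my2}, here applied to the new first return map from Theorem~\ref{conden}.

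Next I would identify the quasi-minimal set and show attraction. Let $\L$ be the closure in $\T$ of the flow orbit of any point of $K$; equivalently $\L$ is the suspension of $K$ enlarged by the two unstable separatrices of $s$ accumulating on it. Since $K$ is the minimal set of $f$ and the orbits of points in $K$ are non-trivially recurrent, $\L$ is a quasi-minimal set of the flow. Now let $V\subset\T$ be the open saturation of $W$ under the flow (i.e.\ the union of trajectories hitting $W$). For $z\in V$ the flow trajectory returns to $\Sigma$ at the successive points $f^n(x)$, $x\in W$; by the Denjoy counterexample property $f^n(x)\to K$ as $n\to\infty$. The continuity of the Poincar\'e return then yields $\omega(z)\subset\L$. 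Hence $V$ is an open subset of the basin of attraction of $\L$, proving that $\L$ is an attractor with basin of non-empty interior. The rotation number of the flow is by construction the rotation number of $f$, which is the prescribed $\rho$.

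The principal obstacle is the first step: assembling a genuine $\Cinf$ Cherry flow whose first return map is exactly (or at least conjugate to) the map $f$ from Theorem~\ref{conden}, while respecting the hyperbolicity of the saddle and matching the $\Cinf$ flat behaviour on $U$ with the smooth local model at $s$. In particular one must check that the eigenvalue ratio at the saddle is compatible with the order of flatness of $f$ along $U$ and that the resulting vector field remains smooth across $W^s(s)$; once this is done, the conclusions about attraction follow directly from the Denjoy counterexample structure of $f$.
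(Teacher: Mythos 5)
Your overall approach is the same as the paper's: suspend the map $f$ from Theorem~\ref{conden} into a Cherry flow and use the flow saturation of the wandering interval to produce an open subset of the realm of attraction of the quasi-minimal set. The paper dispatches the suspension step in one line; you are more cautious and flag the smoothness matching at the saddle as the principal obstacle, but this does not change the logical structure downstream.

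There is, however, a genuine gap. The paper works with Milnor's definition of attractor (Definition~\ref{attractor}), which has two parts: (1) the realm of attraction $\Lambda(A)$ has positive Lebesgue measure, and (2) there is no strictly smaller closed subset $A'\subset A$ with $\Lambda(A')=\Lambda(A)$ up to a set of zero Lebesgue measure. Your argument establishes (1) --- indeed in the stronger form that $\Lambda(Q)$ contains the open saturation of the wandering interval, giving non-empty interior --- but you never address (2), the irreducibility condition, which is what prevents an ``attractor'' from carrying superfluous points. Without it you have only shown that $Q$ has a large basin, not that it is an attractor in the sense the paper uses. The paper closes this by noting that $Q=\overline\gamma$ for a non-trivial recurrent trajectory $\gamma$, so $\gamma\subset\omega(\gamma)\subset Q$; if $Q'\subset Q$ is closed with $\Lambda(Q')=\Lambda(Q)$ modulo a null set, one deduces $\gamma\subset Q'$, hence $Q'\supset\overline\gamma=Q$ and therefore $Q'=Q$. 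You need to supply this (or an equivalent) minimality argument to complete the proof.
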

Thanks to this result we are now able to control the long-time behavior of the orbits of a large class of Cherry flows.

An interesting problem arising at this point is understanding physical measures of the flows constructed in Theorem \ref{3}. 
Observe that in the standard case of Cherry flows with a saddle point and an attractive point the answer to this question is easy: the Dirac
delta at the attractive point is the only physical measure. For the flows of Theorem \ref{3} all questions remain open. 
For example we expect that physical measures are non-trivial measures concentrated on the attractor. 
For the reference we mention that the only studies of non-trivial physical
measures concerned the case of inverted Cherry flows, viz. flows with a saddle point and a repulsive point, see \cite{SV}, \cite{my1}, \cite{Ya}.

\subsection{Notations and Definitions}
By $\S=\mathbb R/\mathbb Z$ we denote the unit circle and by $R_{\rho}:\S\mapsto\S $, $\rho\in\R$, the map defined as
\[
R_{\rho}(\theta+\mathbb Z) = (\theta+ \rho+\mathbb Z)
\]
and called rotation by $\rho$.

   Let $\pi:\R\mapsto\S$ be the projection of the real line to the circle and let $f:\S\mapsto\S$ be a continuous map. 
   We call a function $F:\R\mapsto\R$ a lift of $f$ if
\[
\pi\circ F=f\circ\pi. 
\]
A lift $F$ inherits regularity properties of $f$ e.g. continuity, differentiability, smoothness. In the following we refer to maps of $\S$ with minuscule letters e.g $f$.  The corresponding capital letter $F$ denotes a lift with the property $F(0)\in [0,1)$, which is unique.

We recall that a continuous function $f:\S\mapsto\S$ has degree one if for all $x\in\R$, $F(x+1)=F(x)+1$.
Moreover, $f$ is said non-decreasing if its lift is non-decreasing.

\subsection{Rotation number}\label{rotnum}
Let $f:\mathbb{S}^1\mapsto\mathbb{S}^1$ be a non-decreasing, continuous function of degree one then the limit
\begin{equation*}
\lim_{n\to\infty}\frac{F^{n}(x)}{n}
\end{equation*}
exists for any $x\in\R$ and it is independent of $x$. This limit is called the \emph{rotation number} of $f$ and will be denoted by $\rho(f)$.\\

$\rho$ as a function of $f$ is continuous:
\begin{prop}\label{famrot1}
If ${(f_n)}_{n \in \N}$ is a family of continuous, non-decreasing functions of degree one and if $f_n\to f_0$ uniformly then $\rho(f_n)\to \rho(f_0)$.  
\end{prop}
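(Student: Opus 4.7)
The plan is to use the classical a priori estimate
\[
\bigl| F^N(x) - x - N\rho(f) \bigr| \le 1,
\]
valid for every non-decreasing continuous degree-one lift $F$, every $x \in \R$, and every $N \in \N$. I would first recall (or quickly rederive) this estimate: the function $x \mapsto F^N(x) - x$ is $1$-periodic because $F$ commutes with integer translations, and monotonicity combined with this commutation forces its oscillation over one period to be at most $1$. Applying this to two points of the same orbit and iterating gives that $F^N(0)/N$ differs from the Birkhoff average $F^{Nk}(0)/(Nk)$ by at most $1/N$, so passing to the limit in $k$ yields the displayed bound.

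Once this inequality is available, continuity of $\rho$ reduces to a three-$\varepsilon$ argument. Given $\varepsilon > 0$, choose $N$ with $1/N < \varepsilon/3$. Then for every non-decreasing degree-one continuous map $g$ one has
\[
\Bigl| \rho(g) - \tfrac{1}{N} F_g^N(0) \Bigr| < \tfrac{\varepsilon}{3},
\]
so it suffices to show $F_n^N(0) \to F_0^N(0)$ for this one fixed $N$; the triangle inequality then yields $|\rho(f_n) - \rho(f_0)| < \varepsilon$ for all sufficiently large $n$.

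For this last step, I would argue that the uniform convergence $f_n \to f_0$ on $\S$ lifts, in view of the normalization $F(0) \in [0,1)$, to uniform convergence $F_n \to F_0$ on $\R$ (with at most a boundary ambiguity in the degenerate case $F_0(0) = 0$ and $F_n(0) \to 1^-$, which only shifts $\rho$ by an integer and is harmless since the rotation number is naturally an element of $\R/\Z$). An easy induction on $k$, using the uniform continuity of $F_0$ on $[0,1]$ together with periodicity, then shows that $F_n^k \to F_0^k$ uniformly on $\R$ for every fixed $k$; applying this with $k = N$ evaluated at $x = 0$ closes the argument. The only genuine difficulty is the bookkeeping with the choice of lifts in the boundary case above; everything else is routine analysis.
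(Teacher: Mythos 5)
Your proof is correct, and the approach is the standard one. The paper states Proposition~\ref{famrot1} without proof, treating it as a classical fact, so there is no in-text argument to compare against; your reconstruction via the a priori bound $\bigl|F^N(x)-x-N\rho(f)\bigr|\le 1$ together with the three-$\varepsilon$ argument and the inductive uniform convergence of $F_n^k\to F_0^k$ is exactly the textbook route (and it is worth noting that the bound does hold in the merely non-decreasing, non-invertible setting the paper needs, since composition preserves the class and the oscillation estimate for $F^N(x)-x$ uses only monotonicity and degree one). You are also right to flag the lift-normalization subtlety: with the paper's convention $F(0)\in[0,1)$ and $\rho\in[0,1)$, the statement can literally fail in the degenerate boundary case (e.g.\ $f_0=\mathrm{id}$, $f_n=R_{1-1/n}$), but it holds with $\rho$ read in $\R/\Z$, which is the natural reading and the only one used in the paper (where the target rotation numbers are irrational).
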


The rotation number is also non-decreasing:
\begin{prop}
If $F_1< F_2$ then $\rho(f_1)\leq\rho(f_2)$. Moreover if $\rho(f_1)$ or $\rho(f_2)$  is irrational then $\rho(f_1)<\rho(f_2)$.
\end{prop}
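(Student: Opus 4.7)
The non-strict inequality is a direct induction. My plan is to show by induction on $n$ that $F_1 \leq F_2$ pointwise implies $F_1^n \leq F_2^n$ pointwise: granted the inductive hypothesis,
$F_2^{n+1}(x) = F_2(F_2^n(x)) \geq F_2(F_1^n(x)) \geq F_1(F_1^n(x)) = F_1^{n+1}(x)$,
using monotonicity of $F_2$ together with the inductive hypothesis in the first inequality and $F_2 \geq F_1$ in the second. Evaluating at $x = 0$, dividing by $n$, and letting $n \to \infty$ yields $\rho(f_1) \leq \rho(f_2)$.

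For the strict inequality I will argue by contradiction, assuming $\rho(f_1) = \rho(f_2) = \rho$ with $\rho$ irrational. Since $F_2 - F_1$ is continuous and $1$-periodic (both $F_i$ have degree one), compactness provides $\epsilon := \min_x(F_2(x) - F_1(x)) > 0$. The plan is to convert this pointwise gap into a rotation-number gap through the Poincar\'e semi-conjugacy: for $\rho$ irrational there exists a continuous non-decreasing $h$ of degree one satisfying $h(F_1(x)) = h(x) + \rho$ for all $x$. Applying the monotone $h$ to $F_2(x) \geq F_1(x) + \epsilon$ gives
$h(F_2(x)) \geq h(F_1(x) + \epsilon) \geq h(F_1(x)) = h(x) + \rho$,
with strict inequality whenever $h$ is not constant on $[F_1(x), F_1(x) + \epsilon]$. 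Substituting $F_2^k(x)$ for $x$ and telescoping produces
$h(F_2^n(x)) - h(x) - n\rho \geq \sum_{k=0}^{n-1}\bigl[h(F_1(F_2^k(x)) + \epsilon) - h(F_1(F_2^k(x)))\bigr]$.
Because $h(y) - y$ is $1$-periodic and hence bounded, a linear lower bound on the right-hand side forces $F_2^n(x) - x \geq n(\rho + c) - O(1)$ for some $c > 0$, and therefore $\rho(f_2) \geq \rho + c > \rho$, contradicting the assumption.

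The hard part is then establishing the linear lower bound on the telescoping sum. In the minimal case, where $h$ is a homeomorphism and $f_1$ is conjugate to $R_\rho$, the function $h$ is uniformly strictly increasing, so by compactness each summand is bounded below by some $\delta > 0$ and linear growth is immediate. The Denjoy case, in which $h$ is constant on the countable family of wandering intervals of $f_1$, is the main obstacle: I would close it by an ergodic-theoretic step, applying Birkhoff's theorem along the unique $f_2$-invariant probability measure $\mu_2$ to the non-negative observable $g(y) := h(F_1(y) + \epsilon) - h(F_1(y))$, and checking that $\int g \, d\mu_2 > 0$. For the latter, the point is that the union of flat intervals of $h$ cannot carry all the mass of $\mu_2$, because the strict inequality $F_2 > F_1$ pushes $f_2$-orbits across the gaps of the Cantor minimal set of $f_1$, so $g$ is strictly positive on a set of positive $\mu_2$-measure.
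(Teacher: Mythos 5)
The paper states this proposition without proof (it is a standard fact in circle dynamics), so there is no in-paper argument to compare against. Your induction for the non-strict inequality is correct.

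The strict part has a genuine gap. Everything through the telescoping inequality is fine, but the argument then hinges on $\int g\, d\mu_2 > 0$ for the observable $g(y) := H(F_1(y)+\epsilon) - H(F_1(y))$. Since $g$ is continuous and non-negative, this is equivalent to $g$ not vanishing identically on $\supp\mu_2$, and the justification you offer --- ``the strict inequality $F_2 > F_1$ pushes $f_2$-orbits across the gaps of the Cantor minimal set of $f_1$'' --- is a paraphrase of the conclusion $\rho(f_2) > \rho(f_1)$ rather than an independent argument for it. Your own telescoping gives precisely $\rho(f_2) - \rho(f_1) \geq \int g\, d\mu_2$, so positivity of the integral is the statement under proof, and ``orbits crossing gaps'' is its content: the step is circular. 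To close it you would need to exhibit a point of $\supp\mu_2$ whose $F_1$-image lies within $\epsilon$ of the non-flat set of $h$, and nothing in the set-up guarantees that, since $\{g=0\}$ is the $F_1$-preimage of a union of sub-gaps with no a priori relation to $\supp\mu_2$. (Using the stronger observable $g'(y) := H(F_2(y)) - H(F_1(y))$, which gives the exact identity $\rho(f_2)-\rho(f_1) = \int g'\,d\mu_2$, makes the circularity even more visible.) A secondary imprecision: the maps here need not be homeomorphisms, so $h$ is constant not only on wandering intervals of $f_1$ but also on its flat pieces; the dichotomy ``minimal vs.\ Denjoy'' does not cover the case at hand.

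There is a short elementary argument that bypasses semi-conjugacy and ergodic theory entirely. By periodicity and compactness $F_2 \geq F_1 + \epsilon$ for some $\epsilon > 0$. Suppose $\rho := \rho(f_1)$ is irrational (the other case is symmetric). Pick a convergent $p/q$ of $\rho$ with $p/q < \rho < p/q + 1/q^{2}$ and $q > 1/\epsilon$. Since $\rho(f_1) > p/q$ and $\rho$ is irrational, the continuous $1$-periodic function $F_1^q(x) - x - p$ never vanishes and cannot be everywhere negative (that would force $\rho(f_1) \leq p/q$), so $F_1^q(x) > x + p$ for all $x$. The same induction as in part one yields $F_2^q(x) \geq F_1^q(x) + \epsilon > x + p + \epsilon$, and iterating in blocks of length $q$ gives $F_2^{nq}(0) > n(p+\epsilon)$, hence
\begin{equation*}
\rho(f_2) \;\geq\; \frac{p+\epsilon}{q} \;=\; \frac{p}{q} + \frac{\epsilon}{q} \;>\; \frac{p}{q} + \frac{1}{q^{2}} \;>\; \rho.
\end{equation*}
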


\section{Proof of Theorem \ref{conden}}
\subsection{Technical Lemmas}

\begin{lemma}\label{eq}
Let $f:\S\mapsto\S$  be a continuous, non-decreasing, degree one function with irrational rotation number $\rho\in\left[0,1\right)$. Then the following statements are equivalent:
\begin{enumerate}
\item $f$ is a Denjoy counterexample,
\item $f$ has not dense orbits,
\item $f$ has a wandering interval,\emph{ i.e.} there exists a non-empty interval $I\subset\S$ such that, for all $n,m\in\Z$, $n\neq m$, $f^n(I)\cap f^m(I)=\emptyset$,
\item there exists an interval $I\subset\S$ such that $\left|I\right|>0$ and $\left|f^n(I)\right|\rightarrow0$ for $n\rightarrow +\infty$.
 \end{enumerate}
\end{lemma}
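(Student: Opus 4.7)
The plan is to use the classical Poincar\'e semi-conjugacy as the organizing principle. Since $\rho$ is irrational, there is a continuous, non-decreasing, degree-one surjection $h:\S\to\S$ with $h\circ f=R_\rho\circ h$, and $h$ is a homeomorphism if and only if it has no non-trivial constant interval; in that case $f=h^{-1}\circ R_\rho\circ h$ is conjugate to $R_\rho$. I would first show that each of (1), (2), (3) is equivalent to the statement ``$h$ is not a homeomorphism'', and then settle $(3)\Leftrightarrow(4)$ by a separate argument.

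For the first block, the key point is that $h$ has a non-trivial collapsed interval $K$ if and only if $f$ has a wandering interval. If $h(K)=\{p\}$, then $h(f^n(K))=R_\rho^n(p)$ are pairwise distinct (by irrationality of $\rho$), so the $f^n(K)$ lie in pairwise disjoint fibers of $h$, and $K$ itself is wandering; conversely, if $h$ is a homeomorphism and $I$ were wandering, the sets $h(f^n(I))=R_\rho^n(h(I))$ would be infinitely many pairwise disjoint translates of an interval of positive length in $\S$, which is impossible. In the same way, when $h$ is a homeomorphism every orbit of $f$ is the image under the homeomorphism $h^{-1}$ of a dense $R_\rho$-orbit, hence dense; and when $h$ has a collapsed interval $K$ with non-empty interior, any orbit $\{f^n(x)\}_{n\geq 0}$ meets $K$ at most once, because $f^n(x)\in K$ forces $R_\rho^n(h(x))=h(K)$, an equation with at most one integer solution by irrationality of $\rho$.

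The implication $(3)\Rightarrow(4)$ is immediate: the forward iterates $\{f^n(I)\}_{n\geq 0}$ of a wandering $I$ are pairwise disjoint subintervals of $\S$, so $\sum_{n\geq 0}|f^n(I)|\leq 1$ and $|f^n(I)|\to 0$. The main obstacle is the reverse $(4)\Rightarrow(3)$, which I would prove by contraposition. Assume $f$ has no wandering interval, so $h$ is a homeomorphism by the previous block. Given $I$ with $|I|>0$, set $\ell:=|h(I)|>0$. Then $f^n(I)=h^{-1}(R_\rho^n(h(I)))$, and its length equals $\phi(x_n)$, where $\phi(x):=|h^{-1}([x,x+\ell])|$ and $x_n$ is the left endpoint of $R_\rho^n(h(I))$. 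The function $\phi$ is continuous on $\S$ and strictly positive (since $h^{-1}$ is a homeomorphism, its image of any non-degenerate interval is non-degenerate), hence bounded below by some $\delta>0$ by compactness of $\S$. Therefore $|f^n(I)|\geq\delta$ for every $n$, contradicting (4).
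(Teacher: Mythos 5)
The paper does not give its own proof of this lemma; it simply cites Hall (\cite{hall}, p.~263), so there is no in-paper argument to compare you against. Your proof via the Poincar\'e semi-conjugacy $h$ with $h\circ f=R_\rho\circ h$ is the standard route and is essentially what Hall does; it is correct as far as it goes. The block showing (1), (2), (3) are each equivalent to ``$h$ is not a homeomorphism'' is sound (the one small thing you leave implicit is the converse direction of (1): that if $f$ \emph{were} conjugate to $R_\rho$ then the canonical semi-conjugacy $h$ would be forced to be a homeomorphism, which follows from uniqueness of $h$ up to post-composition by a rotation, or alternatively from your own (2)). The argument for $(3)\Leftrightarrow(4)$, using $\sum_n |f^n(I)|\le 1$ in one direction and uniform positivity of $\phi(x)=|h^{-1}([x,x+\ell])|$ in the other, is clean and correct.

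One caveat worth flagging, because of how the lemma is used later in the paper: the hypotheses permit $f$ to be constant on an interval $U$ (which is exactly the situation in Theorem~\ref{conden}), and then $h$ is \emph{never} a homeomorphism since it must collapse $U$. Under your reading, all four conditions are then automatically true, so the lemma is vacuous in that regime. Your proof is internally consistent with that reading, but the paper invokes the lemma to obtain a \emph{contradiction} with the Corollary to Theorem~1 of \cite{a}, which concerns wandering intervals whose forward iterates remain non-degenerate and never fall into the flat part. That finer notion is also what the paper's informal definition in the introduction requires ($f^n|_I$ a diffeomorphism). So if you want your argument to serve the paper's actual use of the lemma, you should strengthen the third block: from ``some orbit is not dense'' you need to produce a collapsed fiber $K$ of $h$ whose forward $f$-iterates never meet $U$ (equivalently, $|f^n(K)|>0$ for all $n$), not merely a collapsed fiber. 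That requires a short additional argument beyond ``$h$ has some non-degenerate fiber.''
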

The proof of this lemma can be found in \cite{hall}, p. 263.\\

In the following, for all $m\in\N$ the derivative of order $m$ of a real function $F$ in a point $x$ will be denoted by $F^{(m)}(x)$.

For $F:\R\mapsto\R$, a $j$-times differentiable map we define $$ ||F||_{\mathcal C^j}=\sup_{x\in\R \text{\\ }0\leq i\leq j}\left|F^{(i)}(x)\right|$$.

\begin{lemma}
	Let $n\in\N$ odd (resp. even) and let $ {f}  : \S\mapsto \S$ be a $\C^n$, non-decreasing, degree one function, 
	for which there exists an interval $U=\pi(a,b)$ such that for all $x\in (a,b)$ and for all $m\in\N$, $F^{(m)}(x)=0$.

	Then, $\forall \epsilon\in(0,\frac{1}{4})$, $\forall \delta \in (0,1)$, 
	there exists a $\C^n$, non-decreasing, degree one function,  
	$\tilde{f} = \tilde{f}_{n,\epsilon, \delta} : \S \mapsto \S$, satisfying the following conditions:

	\begin{enumerate}
		\item $\norm{\tilde{F} - F} < \delta$,
		\item $\rho(\tilde{f} ) = \rho(f)$,
		\item $|\tilde{F}^{(1)}(x) - F^{(1)}(x)| < \delta F^{(1)}(x)$, $\forall x \notin (a-\frac{1}{4}(b-a), b+\frac{1}{4}(b-a) )$,		
		\item for all $m\in\N$, $\tilde{F}^{(m)}(x) = 0 \Leftrightarrow x \in (a, a+\frac{\epsilon}{2^n})\cup (a+\frac{2\epsilon}{2^n},b)$ (resp. 
		$x \in (a, b-\frac{2\epsilon}{2^n})\cup (b-\frac{\epsilon}{2^n},b)$).
	\end{enumerate}
\label{lemma:primoleft}
\end{lemma}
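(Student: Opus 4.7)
The strategy is to perturb $F$ on and near the flat interval by adding a small $\C^\infty$ $1$-periodic bump scaled by a parameter $\alpha$, and then absorbing the resulting change of the rotation number by a constant shift $t$. I treat the case $n$ odd; the case $n$ even is symmetric.

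First I would construct a non-negative $1$-periodic profile $g\in\Cinf$ which, on the period containing $(a,b)$, equals $0$ on $(-\infty, a+\epsilon/2^n]$, is strictly increasing from $0$ to $1$ on $[a+\epsilon/2^n, a+2\epsilon/2^n]$ with all derivatives vanishing at both endpoints, equals $1$ on $[a+2\epsilon/2^n, b]$, decreases monotonically from $1$ to $0$ on $[b, b+\tfrac{1}{4}(b-a)]$ again flatly at both endpoints, and equals $0$ thereafter until the next period. The flatness of $g$ at its transition points is needed so that $F+\alpha g$ remains globally $\C^n$ and introduces no new zeros of its derivatives of order $\leq n$ outside the set prescribed in condition~4. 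I would then consider
\[
\tilde F_{\alpha,t}(x)\;=\;F(x)+\alpha\,g(x)+t.
\]

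The structural properties are now straightforward. Degree one follows from the $1$-periodicity of $g$. Condition~4 is immediate by construction. Condition~3 holds trivially because $g$ and $g'$ vanish outside $(a-\tfrac{1}{4}(b-a), b+\tfrac{1}{4}(b-a))$, so $\tilde F'=F'$ there. Monotonicity of $\tilde F_{\alpha,t}$ is immediate wherever $g'\geq 0$; on the buffer $[b, b+\tfrac{1}{4}(b-a)]$, where $g'<0$, it holds for $\alpha$ smaller than $\min F'$ on that buffer, which is positive under the implicit assumption that $(a,b)$ is isolated among the flat intervals of $F$ in that region.

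The heart of the argument is the choice of $t$ that pins down the rotation number. Fix a small $\alpha>0$ and set $r_\alpha(t):=\rho(\tilde f_{\alpha,t})$. By Proposition~\ref{famrot1}, $r_\alpha$ is continuous in $t$, by the subsequent monotonicity proposition it is non-decreasing, and since $\tilde F_{\alpha,t+1}=\tilde F_{\alpha,t}+1$ one checks $r_\alpha(t+1)=r_\alpha(t)+1$, so $r_\alpha$ takes every real value. The intermediate value theorem yields $t^\ast=t^\ast(\alpha)$ with $r_\alpha(t^\ast)=\rho(f)$. Since $\|\tilde F_{\alpha,0}-F\|_{\C^0}=\alpha$, Proposition~\ref{famrot1} gives $r_\alpha(0)\to\rho(f)$ as $\alpha\to 0$, and the non-decreasing structure of $r_\alpha$ combined with this same continuity estimate allows $t^\ast$ to be chosen with $|t^\ast|\to 0$ as $\alpha\to 0$. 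Putting the pieces together, $\|\tilde F_{\alpha,t^\ast}-F\|_{\C^n}\leq\alpha\|g\|_{\C^n}+|t^\ast|$, which is smaller than $\delta$ for $\alpha$ chosen small enough, yielding condition~1. The main obstacle is this rotation-number compensation step: both the existence of $t^\ast$ (via the intermediate-value argument) and the quantitative control $|t^\ast|\to 0$ rely on the continuity and monotonicity of $\rho$ and on the non-decreasing nature of $F$. A subsidiary technical point is the careful design of $g$ so that its derivatives vanish at each of the four transition points, which is precisely what ensures $\tilde F$ is globally $\C^n$ and respects the precise zero set of condition~4.
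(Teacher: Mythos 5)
Your high-level plan—add a small bump $\alpha g$ whose graph looks like a smoothed step on the flat interval, and then absorb the resulting shift of the rotation number with a constant $t$ chosen by the intermediate value theorem—is a reasonable way to try to carve a gap out of the flat interval, and the rotation-number step (continuity plus monotonicity of $\rho$ in $t$, together with $r_\alpha(t+1)=r_\alpha(t)+1$) is handled correctly. The paper itself delegates the proof to Hall's Lemma~5 and to \cite{my}, so there is no in-text argument to compare against line by line; I will instead focus on where your construction breaks down.

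The monotonicity claim on the buffer $[b, b+\tfrac14(b-a)]$ is wrong, and this is a genuine gap rather than a cosmetic one. You assert that $\min_{[b,\,b+\frac14(b-a)]}F' > 0$ once one assumes $(a,b)$ is the maximal flat interval. But $F$ is $\C^1$ and $F'\equiv 0$ on $(a,b)$, so by continuity $F'(b)=0$; the minimum of $F'$ over your buffer is exactly $0$, and there is no admissible $\alpha>0$ below it. Shrinking the buffer to $[b+\eta,\,b+\tfrac14(b-a)]$ does not save the argument either, because one still has to control $F'(x)+\alpha g'(x)$ on $(b,b+\eta)$, where $F'\to 0$ and $g'<0$. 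You rely on $g$ being $\Cinf$-flat at $b$, which gives $g'(x)=o((x-b)^N)$ for every $N$, but this does \emph{not} dominate $F'$: $F$ is merely $\C^n$ with $F^{(m)}(b)=0$ for $m\le n$, so $F'$ can decay to $0$ faster than any polynomial (indeed faster than any prescribed $\Cinf$-flat function); for such $F$ the ratio $|g'(x)|/F'(x)$ blows up as $x\to b^+$, $F'+\alpha g'$ goes negative, and $\tilde F$ fails to be non-decreasing. In short, an additive bump designed independently of $F$ cannot be guaranteed to respect monotonicity at the flat endpoint.

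To fix this one must tie the descent of $g$ to $F$ itself: near $b^+$ the perturbation should be multiplicative, of the form $\tilde F' = (1+\psi)F'$ with $\psi$ small (equivalently, $g'$ should be a bounded multiple of $F'$ near $b$), so that $\tilde F'\ge(1-\|\psi\|_\infty)F'\ge 0$ automatically, and the bulk of $g$'s decrease should be placed away from the endpoints of the flat interval, where $F'$ is genuinely bounded below. This is precisely the kind of care taken in Hall's Lemma~5 and in Lemma~2.2 of \cite{my}, and it is also why those constructions track explicit polynomial/flat normal forms at the endpoints (compare the $h_{r,n}$, $h_{l,n}$ representations that appear in Lemma~\ref{lemma:secondoleft} and in conditions (4)--(5) of the iterative scheme). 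Without this, your proof proves less than the lemma claims: it establishes conditions (1)--(4) only for those $F$ whose derivative happens to decay no faster than the chosen bump near $b$.
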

\begin{proof}
The proof of this lemma is the same of the proof of Lemma 5 in \cite{hall} or Lemma 2.2 in \cite{my}. 
\end{proof}

%
%

\begin{lemma}\label{lemma:secondoleft}
	Let $n\in\N$ odd (resp. even) and let $\tilde{f}  : \S\mapsto \S$ be a $\C^n$, non-decreasing, degree one function.\\
	We assume that there exist $\epsilon>0$ and two intervals $I = (a, a+\frac{\epsilon}{2^n})$ 
	(resp. $I = (a, b-\frac{2\epsilon}{2^n})$) and $J= (a+\frac{2\epsilon}{2^n},b)$ (resp. $J= (b-\frac{\epsilon}{2^n},b)$ ), such that for all $m\in\N$:
	$$ x \in I\cup J \Leftrightarrow \tilde F^{(m)}(x) = 0 $$
	
	and suppose that there exists a positive integer $r$ such that $\tilde{f}^r(I) \in (b-\frac{\epsilon}{2^{n+1}},b)$ (resp. $\tilde{f}^r(J) \in (a,a+\frac{\epsilon}{2^{n+1}})$).
	\par
	Then, $\forall \sigma \in (0,1)$, $\exists g = g_{n,\epsilon,\sigma} : \S \mapsto \S$ a $\C^{n+1}$, non-decreasing, degree one function satisfying the following conditions:
	\begin{enumerate}
		\item $\norm{ G - \tilde{F}  } <  \sigma$,
		\item $\rho(g) = \rho(\tilde{f} )$,
		\item $| G^{(1)}(x) - \tilde{F}^{(1)}(x)|  <  \sigma \tilde{F}^{(1)}(x)$, $\forall x \notin (a-\frac{1}{4}(b-a), b+\frac{1}{4}(b-a) )$,
		\item $G^{(m)}(x) = 0 \Leftrightarrow x\in J$ (resp. $I$),
		\item on some left-sided (resp. right-sided) neighborhood of $a+\frac{2\epsilon}{2^n}$ (resp. $b-\frac{2\epsilon}{2^n}$), $f$ can be represented as 
		$h_{l,n}((a+\frac{2\epsilon}{2^n}-x )^{n+2})$  (resp. $h_{r,n}((x-b-\frac{2\epsilon}{2^n} )^{n+3})$) where
$h_{l,n}$ (resp. $h_{r,n}$ )is a $\Cinf$-diffeomorphism on an open neighborhood of $a+\frac{2\epsilon}{2^n}$ (resp. $b-\frac{2\epsilon}{2^n}$),
          \item $G^r(I)\Subset (b-\frac{\epsilon}{2^{n+1}},b)$ (resp. $G^r(J)\Subset (a, a+\frac{\epsilon}{2^{n+1}})$)
		
	\end{enumerate}
\end{lemma}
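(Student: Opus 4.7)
The plan is to build $g$ as a localised perturbation of $\tilde f$ supported in a small open set $W\supset I$ contained in $\left(a,a+\frac{2\epsilon}{2^n}\right)$, leaving $g=\tilde f$ everywhere else. The construction mirrors the analogous step developed in \cite{hall} and \cite{my}: first prescribe the local shape of $g$ near the left endpoint $a+\frac{2\epsilon}{2^n}$ of $J$, then extend across $I$ by a monotone $\C^{n+1}$ interpolation, and finally verify the global dynamical conditions (rotation number and location of the orbit of $I$).

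First I would fix a small one-sided neighborhood $V_l$ of $a+\frac{2\epsilon}{2^n}$ inside the gap $\left(a+\frac{\epsilon}{2^n},a+\frac{2\epsilon}{2^n}\right)$ and set $g(x)=h_{l,n}\!\left(\left(a+\frac{2\epsilon}{2^n}-x\right)^{n+2}\right)$ there, where $h_{l,n}$ is a $\Cinf$-diffeomorphism on a neighborhood of $0$ chosen so that $h_{l,n}(0)$ equals the constant value $c_J$ of $\tilde f$ on $J$ and so that $g$ is non-decreasing on $V_l$. Because $(\cdot)^{n+2}$ has all derivatives up to order $n+1$ equal to zero at $a+\frac{2\epsilon}{2^n}$, composition with the $\Cinf$-diffeomorphism $h_{l,n}$ produces a function whose first $n+1$ derivatives vanish at that point, matching the infinitely flat behaviour of $\tilde f$ on $J$; this is precisely what yields condition~(5) and raises the regularity from $\C^n$ to $\C^{n+1}$ at the boundary. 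I would then extend $g$ over the remaining portion of $I\cup\bigl((\text{gap})\setminus V_l\bigr)$ by a standard monotone $\C^{n+1}$ bump-type interpolation that agrees with $\tilde f\equiv c_I$ at $x=a$ and matches the local model at the boundary of $V_l$, taking care that no new arcs of total vanishing of the derivatives are introduced. Scaling the height of the bump, the $\C^n$-norm of $G-\tilde F$ is made smaller than any prescribed $\sigma>0$, yielding~(1). Item~(3) is automatic since the support of the perturbation lies well inside $\left(a-\tfrac14(b-a),b+\tfrac14(b-a)\right)$ and $G^{(1)}\equiv\tilde F^{(1)}$ outside $W$. Item~(4) follows from the construction, item~(5) from the choice of local model, and for item~(6), since $\tilde f^r(I)\in(b-\frac{\epsilon}{2^{n+1}},b)$, shrinking $\sigma$ makes $g^r(I)$ as close as needed to $\tilde f^r(I)$, guaranteeing $g^r(I)\Subset(b-\frac{\epsilon}{2^{n+1}},b)$.

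The one genuinely subtle point is condition~(2). The key observation is that $g=\tilde f$ on $J$ and both take the constant value $c_J$ there; combined with $g^r(I)\subset J$, this gives $g^{r+1}(I)=\{c_J\}=\tilde f^{r+1}(I)$. Consequently, the forward orbit of any $x\in\S$ under $g$ either never meets the perturbation support $W$, in which case it coincides with its $\tilde f$-orbit, or it enters $W$ and within at most $r+1$ further iterates is collapsed to the single point $c_J$, from where the $g$- and $\tilde f$-orbits agree forever. Hence the averages $G^n(x)/n$ and $\tilde F^n(x)/n$ share the same limit and $\rho(g)=\rho(\tilde f)$. Proposition~\ref{famrot1} alone would yield only approximate equality under $\C^0$-closeness; the actual equality rests on this combinatorial collapsing through $c_J$, which is precisely why condition~(6) is built into the conclusion.

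The main obstacle, in my view, is engineering the perturbation so that four rigid requirements are met simultaneously: the precise local power-type form~(5), the jump in regularity to $\C^{n+1}$, monotonicity together with uniqueness of the critical arc~(4), and the $\C^n$-closeness~(1). Once the diffeomorphism $h_{l,n}$ is fixed, the interpolation across $I$ is flexible enough to absorb the remaining requirements, but the explicit quantitative construction has to be carried out carefully, exactly as in \cite{hall} and \cite{my}.
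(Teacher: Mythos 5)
Your construction---a localized perturbation supported on $I$ and the adjacent gap, with the prescribed local model $h_{l,n}\bigl((a+\tfrac{2\epsilon}{2^n}-x)^{n+2}\bigr)$ near $a+\tfrac{2\epsilon}{2^n}$ to achieve conditions (4), (5) and the jump in regularity to $\C^{n+1}$, while keeping $g=\tilde f$ outside---is the right framework and matches what the paper cites (Hall's Lemma~6 and Lemma~2.3 of \cite{my}; the paper gives no independent proof).

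There is, however, a genuine gap in your argument for condition~(2). The re-merging mechanism you describe is only valid for orbits that enter the arc $I$ itself: if $g^{k}(x)\in I$ then indeed $g^{k+r}(x)\in g^{r}(I)\Subset J$ and $\tilde f^{k+r}(x)=\tilde f^{r}(I)\in J$, so both orbits collapse to $c_J$ at step $k+r+1$. But the support of your perturbation is all of $(a,a+\tfrac{2\epsilon}{2^n})$, which also contains the gap $(a+\tfrac{\epsilon}{2^n},a+\tfrac{2\epsilon}{2^n})$ between $I$ and $J$. On that gap both $\tilde f$ and $g$ are local diffeomorphisms with $g\neq\tilde f$; if the common orbit first hits $W$ inside the gap, the two orbits separate there and nothing in your argument forces them to re-merge. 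You cannot simply shrink $W$ so that $W=I$, because condition~(5) obliges you to modify $\tilde f$ on a left-sided neighborhood of $a+\tfrac{2\epsilon}{2^n}$, which lies in the gap. The standard remedy (and, as far as I can tell, what Hall actually does) is to make the perturbation one-sided so that $G\ge\tilde F$ (or $\le$) everywhere, giving the monotone comparison $\rho(g)\ge\rho(\tilde f)$, and to pin down equality either by the collapsing through $J$ as the bound in the other direction, or, more robustly, by introducing the auxiliary family $G_s=G+s$ and invoking continuity and monotonicity of the rotation number (Proposition~\ref{famrot1} and the following proposition); the correction $s$ only costs $O(|s|)$ in condition~(1) and does not disturb (3), (4), (5). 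Finally, your statement that once the orbits reach $c_J$ they ``agree forever'' is slightly overstated: they agree only until the next entry into $W$, after which the re-merging must be invoked again, so what one obtains is an infinite sequence of coincidence times, and one must still check that the lifts stay within bounded distance between consecutive coincidence times to conclude $\rho(g)=\rho(\tilde f)$.
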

\begin{proof}
The proof of this lemma is the same of the proof of Lemma 6 in \cite{hall} or Lemma 2.3 in \cite{my}.
\end{proof}

\subsection{Proof of Theorem \ref{conden}}

%
\begin{proof}
Let $\rho\in\left[0,1\right)$ and $\epsilon\in\left(0,\frac{1}{4}\right)$ be two fixed irrational numbers. 
We start with a $\C^1$, non-decreasing, degree one, circle map $f$ and an interval $U_0=\pi(a_0,b_0)$ of length $l$ such that
\begin{equation}\label{eq:4}
 l>2\sum_{i=0}^{\infty}\frac{\epsilon}{2^i}
 \end{equation}
 and 
\begin{itemize}
 \item $\forall m\in\N$, $F^{(m)}(x)=0 \Leftrightarrow x\in (a_0,b_0)$, 
 \item on some right-sided neighborhood of $b_0$, $f$ can be represented as $h_{r,0}((x-b_0 )^2 )$ where
$h_{r,0}$ is a $\Cinf$-diffeomorphism on an open neighborhood of $b_0$,
\item on some left-sided neighborhood of $a_0$, $f$ can be represented as $h_{l,0}((a_0-x )^2 )$ where
$h_{l,0}$ is a $\Cinf$-diffeomorphism on an open neighborhood of $a_0$.
\end{itemize}

For all $t\in[0,1)$ we consider $F_t = F+ t$ and we choose $t_0$ such that $\rho(f_{t_0})=\rho$.
The existence of such a $t_0$ is guaranteed by Proposition \ref{famrot1}. 

We denote by $f_0=f_{t_0}$ and by $I$ a proper subset of $f_0^{-1}(b_0-\epsilon, b_0)$ 
having strictly positive length. 

The Denjoy counterexample will be constructed as limit of a sequence of functions:

\begin{displaymath}
(f_n:\S\mapsto\S)_{n \in \N},
\end{displaymath}
for which there exist a sequence of arcs $\{U_n\}_{n \in \N}$ with 
\[
U_n=\pi((a_n,b_n)) \text{ and } U_{n}\subset U_{n-1}
\]

and a sequence of integers $\{r_n\}_{n \in \N}$ fulfilling 
\begin{displaymath}
1=r_0 \text{ and } r_n<r_{n+1}
\end{displaymath}
 such that, for all $i\in\N$ the following conditions are satisfied:
\begin{enumerate}
\item $f_i$ is a $\C^i$, non-decreasing, degree one map,
\item $\rho(f_i)=\rho$,
\item $\forall m\in\N$, ${F^{(m)}_i}(x)=0$ if and only if $x\in (a_i,b_i)$,
\item if $i$ is odd (resp. even) on some right-sided neighborhood of $b_i$, $f$ can be represented as $h_{r,i}((x-b_i )^{i+3})$  (resp. $h_{r,i}((x-b_i )^{i+2})$) where
$h_{r,i}$ is a $\Cinf$-diffeomorphism on an open neighborhood of $b_i$,
\item if $i$ is odd (resp. even) on some left-sided neighborhood of $a_i$, $f$ can be represented as $h_{l,i}((a_i-x )^{i+1})$ (resp. $h_{l,i}((a_i-x )^{i+2})$) where
$h_{l,i}$ is a $\Cinf$-diffeomorphism on an open neighborhood of $a_i$,
\item $|U_i|= |U_{i-1}|-\frac{2\epsilon}{2^{i-1}}$,
\item $\left\|F_i-F_{i-1}\right\|_{\mathcal C^{i}}\leq\frac{1}{2^{i}}$,
\item $| F_i^{(1)}(x) - F_{i-1}^{(1)}(x) | < \frac{1}{2^{i+1}}F_i^{(1)}(x)$, $\forall x \notin (a_i-\frac{1}{4}(b_i-a_i), b_i+\frac{1}{4}(b_i-a_i) )$,

\item $0<\left|f_i^j\left(I\right)\right|<\frac{1}{2^{k-1}}$, if $r_{k-1}\leq j<r_k$ for $k \in \{1,2,\ldots,i\}$,
\item $f_i^j\left(I\right)\cap U_i=\emptyset$, if $0\leq j<r_i$ and 

\[
f_{i}^{r_i}\left(I\right)\Subset 
\bigg\{
\begin{tabular}{ccc}
  $\pi(b_i-\frac{\epsilon}{2^{i}},b_i)$  if $i$ is even  \\
 $ \pi(a_i, a_i+\frac{\epsilon}{2^{i}})$ if $i$ is odd 
  \end{tabular}
\]

\end{enumerate}
The sequence $\{f_n\}_{n \in \N}$ will be constructed iteratively. 
Suppose that we have $f_n$. We produce $f_{n+1}$ by perturbing $f_n$ in the way that the conditions $(1)$-$(10)$ are still satisfied. 
We suppose $n$ odd (the case of $n$ even is analogous), then $f_{n}^{r_n}\left(I\right)\Subset \pi(a_n, a_n+\frac{\epsilon}{2^{n}})$. 

We fix $\delta\in(0,1)$. By Lemma \ref{lemma:primoleft} applied to $f_n$ we produce a $\C^n$, non-decreasing, degree one function 
$\tilde{f}_{n,\delta}:=\tilde{f}_{n,\epsilon, \frac{\delta}{2^{n+2}}}$ such that $\forall m\in\N$, 
$$\tilde F^{(m)}_{n,\delta}(x)=0 \Leftrightarrow x\in (a_n, a_n+\frac{\epsilon}{2^n})\cup (a_n+\frac{2\epsilon}{2^n}, b_n).$$
 
\par
We study now the orbit of $I$ under $\tilde{f}_{n,\delta}$. 
Observe that by construction $\tilde{F}_{n,\delta}^i\rightarrow F_n^i$ for $\delta\rightarrow 0$ uniformly for $i\in\{1,2,\ldots,r_n\}$, 
then we can fix $\delta'<\delta<1$, such that:
\begin{displaymath}
\left|\tilde{f}_{n,\delta'}^j\left(I\right)\right|<\frac{1}{2^{k-1}}\textrm{ for all } r_{k-1}\leq j <r_k, \textrm{  } k\in\{1,2,\ldots,n\},
\end{displaymath}

\begin{equation}\label{paura1}
\tilde{f}_{n,\delta'}^j\left(I\right)\cap U_n=\emptyset\textrm{ for all }0\leq j<r_n
\end{equation}
and
\begin{equation}\label{paura}
\tilde{f}_{n,\delta'}^{r_n}\left(I\right)\subset \pi(a_n, a_n+\frac{\epsilon}{2^{n}}).
\end{equation}

Let us now study the orbits of $J_n=\pi(a_n, a_n+\frac{\epsilon}{2^{n}})$ under $\tilde{f}_{n,\delta'}$. We may have two different cases:

\begin{itemize}
\item there exists $m>0$ such that $\tilde{f}_{n,\delta'}^m(J_{n}) \in U_n$,
\item for all $m>0$, $\tilde{f}_{n,\delta'}^m(J_{n}) \notin U_n$.
\end{itemize}

Observe that the second situation never occurs. In fact, in such a case we can construct $g:\S\to\S$ being non-decreasing function of degree one, which is equal 
to $\tilde{f}_{n,\delta'}$ everywhere
except on $U_n\setminus \pi(a_n+\frac{\epsilon}{2^{n}}, b_n)$. Moreover, $g$ can be chosen so that on some right-sided neighborhood of $a_n+\frac{\epsilon}{2^{n}}$  
it is equal to $h_{r,n}((x-(a_n+\frac{\epsilon}{2^{n}}))^{n+1})$ for some $\Cinf$ -diffeomorphism $h_{r,n}$. 
Such $g$ belongs to the class of functions with a flat interval (being $J_n$ in
this case) studied in \cite{a} (see property (5)). We notice that $g^m (J_n) =\tilde{f}_{n,\delta'}^m(J_{n})$ thus the orbits of $g$ are not dense. 
By Lemma \ref{eq} it has a wandering interval and therefore contradicts Corollary to Theorem 1 in \cite{a}.

\par
We study now the first case. The aim is to prove that there exists $\delta''$ (smaller than $\delta'$ if necessary) such that 
\begin{equation}\label{eq:3}
\tilde{f}_{n,\delta''}^m(J_{n})\Subset \pi(b_n-\frac{\epsilon}{2^{n+1}},b_n).
\end{equation}
Because of the fact that the rotation number is irrational $f_n^m(U_n)$ does not enter into $U_n$ and we may suppose that $f_n^m(U_n)$ 
approaches $U_n$ from the right side. Observe that the functions $\tilde{f}_{n,\delta}$ approximate $f_n$, in particular
\[
	      \tilde{f}_{n,\delta}^m(J_{n}) \to f_n^m(U_n),
\]
as $\delta\searrow 0$. Moreover $\delta \mapsto \tilde{f}_{n,\delta}^m(J_{n})$ is continuous and the set
\[
	A=\{\tilde{f}_{n,\delta}^m(J_{n}):\delta \in [0,\delta']\},
\]
is an interval of $\S$ containing $\tilde{f}_{n,\delta'}^m(J_n)$ and $f_n^m(U_n)$. The proof is concluded once we observe that, 
because of the fact that the rotation number $\rho$ is irrational, $A \cap J_n = \emptyset$, so $A$ covers a portion of the interior of 
$\pi(a_n+\frac{\epsilon}{2^{n}},b_n)$. 
Because we are supposing that $f_n^m(U_n)$ approaches $U_n$ from the right side, we can choose $\delta''<\delta'$ if necessary such that 
$\tilde{f}_{n,\delta}^m(J_{n})$ is contained 
in a sub-interval of $\pi(b_n-\frac{\epsilon}{2^{n+1}},b_n)$. 

We denote by $a_{n+1}=a_n+\frac{2\epsilon}{2^{n}}$, $b_{n+1}=b_n$ and consequently  $U_{n+1}=\pi((a_n+\frac{2\epsilon}{2^{n}},b_n))$. 
We fix $\sigma > 0$ and we apply Lemma \ref{lemma:secondoleft} to the function $\tilde{f}_{n,\delta''}$. We get a $\C^{n+1}$, non-decreasing, 
degree one map $f_{n+1,\sigma}=f_{n+1,\epsilon,\frac{\sigma}{2^{n+1}}}:\S\mapsto\S$ such that for all $m\in\N$, 
$$F^{(m)}_{n+1,\sigma}(x)=0 \Leftrightarrow x\in (a_{n+1}, b_{n+1}).$$

Finally we define $r_{n+1}=r_n+m$. Since $f_{n+1,\sigma}^i\rightarrow \tilde{f}_{n}^i$ uniformly for all $i\in\{1,2,\ldots,r_n+m\}$ 
and since $\tilde{f}_{n}^i\left(I\right)$ is a singleton for $i>r_n$, then  we get that (we set $f_{n+1}=f_{n+1,\sigma}$):
\begin{displaymath}
\left|f_{n+1}^j\left(I\right)\right|<\frac{1}{2^{k-1}}
\end{displaymath}
for all $r_{k-1}\leq j<r_k$, with $k\in\{1,2,\ldots,n\}$, and
\begin{displaymath}
\left|f_{n+1}^j\left(I\right)\right|<\frac{1}{2^n}
\end{displaymath}
if $r_n\leq j< r_{n+1}$.\\
By (\ref{paura1}),  (\ref{paura}) and (\ref{eq:3}), since $f_{n+1,\sigma}^i\rightarrow \tilde{f}_n^i$ uniformly for all $i\in\{1,2,\ldots,r_{n+1}\}$, then
\begin{displaymath}
f_{n+1}^i\left(I\right)\cap U_{n+1}=\emptyset\textrm{ for all }0\leq i<r_{n+1}
\end{displaymath}
and

\begin{displaymath}
f_{n+1}^{r_{n+1}}\left(I\right) \Subset \pi(b_n-\frac{\epsilon}{2^{n+1}},b_n).
\end{displaymath}

So we have constructed a sequence of functions $(f_k)_{k \in \N}$ satisfying $(1)$-$(10)$ for all $n\geq 1$. 
By condition $(7)$ the sequence $(f_k)_{k \in \N}$ converges in the sense of the norm $\|\cdot\|_{\mathcal C^n}$ for all $n$. 
Then the limit function $f$  is a $\Cinf$, non-decreasing, degree one circle map which has rotation number $\rho$ (see $(1), (2), (3), (4)$ and $(5)$). 
Let consider now $a=\lim_n a_n$ and $b=\lim_n b_n$ and $U=\pi(a,b)$.  By $(3)$ and $(8)$ $F^{(1)}(x)=0$ if and only if $x\in (a,b)$ 
and by $(6)$ and the assumption on the size of $U_0$ (see \ref{eq:4}), $U$ has strictly positive length.
\par
Finally, by conditions $(9)$ and $(10)$,
\begin{displaymath}
\left|f_n^i\left(I\right)\right|\rightarrow 0\textrm{ for }i\rightarrow+\infty
\end{displaymath}
uniformly in $n$, and then
\begin{displaymath}
\left|f^i\left(I\right)\right|\rightarrow 0\textrm{ if }i\rightarrow+\infty.
\end{displaymath}
By Lemma \ref{eq} $f$ has a wandering interval and thus it is not conjugate to a rotation.

\end{proof}
\section{Applications: Cherry Flows}
\subsection{Basic Definitions}
Let $X$ be a $C^{\infty}$ vector field on the torus $T^2$. 
Denote the flow through a point $x$ by $t\rightarrow X_t(x)$.

The $\omega$-\textit{limit set} of a positive semi-trajectory $\gamma^{+}(x)$ is the set
\begin{displaymath}
\omega(\gamma^{+}(x))=\left\{y:\exists t_n\rightarrow\infty \textmd{ with } X_{t_n}(x)\rightarrow y \right\},
\end{displaymath}
and $\alpha$-\textit{limit set} of a negative  semi-trajectory $\gamma^{-}(x)$ is
\begin{displaymath}
\alpha(\gamma^{-}(x))=\left\{y:\exists t_n\rightarrow\infty \textmd{ with } X_{-t_n}(x)\rightarrow y \right\}.
\end{displaymath}
The $\omega$-limit set ($\alpha$-limit set) of any positive (negative) semi-trajectory of the trajectory $\gamma$ is called $\omega$-limit set $\omega(\gamma)$ of $\gamma$ ($\alpha$-limit set $\alpha(\gamma)$ of $\gamma$).

The trajectory is $\omega$-recurrent ($\alpha$-recurrent), if it is contained in its $\omega$-limit set ($\alpha$-limit set). The trajectory is \textit{recurrent} if it is both $\omega-$recurrent and $\alpha-$recurrent. A recurrent trajectory is \textit{non-trivial} if it is neither a fixed point nor a periodic trajectory.

\begin{defin}
A Cherry flow is a $\Cinf$ flow on the torus $\T$ without closed trajectories which has exactly two singularities, a sink and a saddle, both hyperbolic.
\end{defin}

The first example of such a flow was given by Cherry in \cite{Cherry}.

\subsection{Basic Properties of Cherry Flows}
We state now basic properties of Cherry flows. For more details the reader can refer to \cite{NZE}, \cite{MvSdMM}, \cite{mythesis}.

Given a Cherry flow $X$ we can always find a circle $\S$ which is not retractable to a point and is everywhere transverse to $X$. It is called a Poincar\'e section.

The first return map $f$ of $X$ to $\S$ turns out to be a continuous degree one circle map which is constant on an interval $U$ 
(it corresponds to the interval of points which are attracted to the singularities).

\begin{defin}\label{QM}
Let $\gamma$ be a non-trivial recurrent trajectory. Then the closure $\overline{\gamma}$ of $\gamma$ is called a quasi-minimal set.
\end{defin}

Every Cherry flow has only one quasi-minimal set which is locally homeomorphic to the Cartesian product of a Cantor set $\Omega$ and a segment $I$. 
Moreover $\Omega$ is equal to the non-wandering set\footnote{the set of the points $x$ such that for any open neighborhood $V\ni x$ there exists an integer $n>0$ such that the intersection of $V$ and $f^n(V)$ is non-empty.} of the first return function $f$.

We give now the definition of attractor. For more details on the concept of attractor the reader can refer to \cite{milnor}.
\begin{defin}\label{attractor}
Let $M$ be a smooth compact manifold.
A closed subset $A\subset M$ is called an attractor if it satisfies two conditions:
\begin{enumerate}
\item the realm of attraction $\Lambda(A)$ consisting of all points $x\in M$ for which $\omega(x)\subset A$, has strictly positive Lebesgue measure;
\item there is no strictly smaller closed set $A' \subset A$ so that $\Lambda (A')$ coincides with $\Lambda (A)$
up to a set of zero Lebesgue measure.
\end{enumerate}
The first condition says that there is some positive possibility that a randomly
chosen point will be attracted to $A$, and the second says that every part of $A$ plays
an essential role.
\end{defin}

We are now ready to construct an example of Cherry flow whose quasi-minimal set is a attractor.  
\subsection{Proof of Theorem \ref{3}}

\begin{proof}
Let $\rho$ be an irrational number and let $f$ be the Denjoy counterexample constructed in Theorem \ref{conden} having rotation number $\rho$. 
By suspending $f$, we get a Cherry flow $X$ having first return map $f$, see Appendix $1$. We claim that the quasi-minimal set $Q$ of $X$ is an attactor. 
Let $\S$ be the Poincar\'e section and let $I\subset\S$ be the wandering interval of $f$. 
Then $\omega(I)\subset Q$ and the realm of attraction $\Lambda(Q)$ has positive Lebesgue measure. Point 1 of Definition \ref{attractor} is satisfied.

For point 2, by Definition \ref{QM}, $Q=\overline{\gamma}$ with $\gamma$ a non-trivial recurrent orbit. In particular $\gamma\subset\omega(\gamma)\subset\overline\gamma$.
Let $Q' \subset Q$ be a closed set such that $\Lambda (Q')=\Lambda (Q)$. Then $\gamma\subset Q'$ and consequently $Q=Q'$. The proof is now complete.
\end{proof} 
\section*{Acknowledgments}
The author is very grateful to Prof. J. Graczyk, who introduced her to the topic of the paper, to Prof. A. Avila for helpful discussions and to Prof. E. Pujals for the argument in Appendix $1$. The author was supported by funds allocated to the implementation of the international co-funded project in the years 2014-2018, 3038/7.PR/2014/2, and by the EU grant PCOFUND-GA-2012-600415.

\section{Appendix 1}\label{App1}

Given a $C^\infty$ circle map with a flat interval one can construct a corresponding Cherry flow. If the circle map has a critical exponent $\ell>0$ this will lead to a Cherry flow with a hyperbolic saddle point. In the case when the behavior of the circle map is flat near the boundary of the flat interval one has to pay more attention to whether the flow is indeed $C^\infty$. In this appendix we will sketch a construction, suggested by E. Pujals, of a saddle-like singularity with any given flat transition map.

\begin{theo} Given a $C^\infty$ homeomorphism $f:[0,1]\to [0,1]$ flat at $0$ (i.e. all derivatives vanish at $0$) then there exists a $C^\infty$ vector field of the form
\begin{equation}\label{vector}
\frac{dx}{dt}=v(x), \text{ }  \frac{dy}{dt}=-w(x)
\end{equation}
where $v$ and $w$ are $C^\infty$ non-negative functions flat at $0$ such that the transition map from  $[0,1]\times\{1\}\to \{1\}\times [0,1]$ is $f$. That is the orbit starting at $(x,1)$ exits the unit square in $(1, f(x))$.
\end{theo}
\begin{proof}
Observe, one need the following condition on $v$ and $w$
$$
\int_x^1 \frac{dx}{v(x)}=\int_0^t dt=-\int_1^{f(x) }\frac{dy}{w(y)}.
$$
For $x=1$ this equation always holds. So it is suffcient that the equation obtained by taking the derivative is satisfied. This gives
\begin{equation}\label{w}
w(f(x))=v(x)\cdot Df(x).
\end{equation}
Let 
$$
v(x)=e^{-\frac{1}{Df(x)}}.
$$
Using (\ref{w}), we define the function $w$. The corresponding vector field  (\ref{vector}) will have $f$ as its transition map. Both function $v$ and $w$ are $C^\infty$ and flat at $0$. This can be seen as follows. A calculation shows that the derivatives of $v$ are finite sums of the form
$$
D^{n}v(x)=\sum_{\underline p=(p, p_1, p_2,\dots, p_{n+1})} A_{\underline p}\cdot v(x)\cdot \frac{1}{Df(x)^p}\cdot \prod_{k\le n}( D^{k}f(x))^{p_k}
$$
with $A_{\underline p}\in\Z$.
The factor $v(x)\cdot \frac{1}{Df(x)^p}$ will go to zero when $x\to 0$. This means that $v$ is $C^\infty$ and flat at $0$. The function $w$ has derivatives of a similar shape. Namely, this follows from (\ref{w}),
$$
D^{n}w(f(x))=\sum_{\underline p=(p, p_1, p_2,\dots, p_{n+1})}B_{\underline p}\cdot v(x)\cdot \frac{1}{Df(x)^p}\cdot \prod_{k\le n}( D^{k}f(x))^{p_k}
$$
with $B_{\underline p}\in\Z$.
This implies again that $w$ is $C^\infty$ and flat at zero.
\end{proof}

\end{document}